\NewDocumentCommand{\eulerian}{omm}
 {%
  \IfNoValueTF{#1}
   {\genfrac<>{0pt}{}{#2}{#3}}%
   {\mathchoice{\genfrac<>{0pt}{}{#2}{#3}_{\!\!#1}}
               {\genfrac<>{0pt}{}{#2}{#3}_{\mkern-2mu#1}}% \! is -3mu
               {\genfrac<>{0pt}{}{#2}{#3}_{\mkern-2mu#1}}
               {\genfrac<>{0pt}{}{#2}{#3}_{\mkern-2mu#1}}%
   }
 }
\newtheorem{theorem}{Theorem}
\begin{document}
\title{Two new explicit formulas for the Bernoulli Numbers}
\author{Sumit Kumar Jha\\
IIIT-Hyderabad, India\\
Email: kumarjha.sumit@research.iiit.ac.in}
\maketitle
\begin{abstract}
\noindent
In this brief note, we give two explicit formulas for the Bernoulli Numbers in terms of the Stirling numbers of the second kind, and the Eulerian Numbers. To the best of our knowledge, these formulas are new. We also derive two more probably known formulas. \\
\textbf{Keywords: }Bernoulli Numbers; Stirling Numbers of the Second Kind; Eulerian Numbers; Riemann Zeta Function\\
\textbf{AMS Classification: }	11B68 
\end{abstract}
\noindent
There are many known explicit formulas known for the Bernoulli Numbers [1]. We prove the following here.
\begin{theorem}
We have
\begin{equation}
B_{r+1}=\frac{(-1)^r\cdot (r+1)\cdot 2^r }{2^{r+1}-1}\sum_{k=1}^{r}\frac{S(r,k)}{k+1}(-1)^{k}2^{-2k}\frac{(2k-1)!}{(k-1)!},
\end{equation}
and
\begin{equation}
B_{r+1}=\frac{(-1)^r(r+1)}{2^r(2^{r+1}-1)}\binom{2r}{r-1}\sum_{l=1}^{r}(-1)^l \eulerian{r}{r-l}\frac{\binom{r-1}{l-1}}{\binom{2r}{2l-1}},
\end{equation}
\begin{equation}
    (-1)^{r-1}B_{r}=\sum_{k=1}^{r}(-1)^{k}\frac{S(r,k)}{k+1}\cdot (k-1)!,
\end{equation}
and 
\begin{equation}
    (-1)^{r-1}B_{r}=\sum_{l=1}^{r}\eulerian{r}{r-l}\frac{(-1)^l}{l\cdot \binom{r+1}{l}},
\end{equation}
where $B_{r+1}$ is the Bernoulli number, $S(r,k)$ denotes the Stirling number of the second kind, and $\eulerian{r}{r-l}$ represent the Eulerian numbers.
\end{theorem}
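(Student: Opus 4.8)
The plan is to deduce all four identities from a single polynomial identity in an auxiliary variable $x$. Writing $x^{\underline{k}}=x(x-1)\cdots(x-k+1)$ and letting $B_{r+1}(\cdot)$ denote the Bernoulli polynomial, I would first establish, for every integer $r\ge 1$,
\[
\sum_{k=1}^{r}\frac{S(r,k)}{k+1}\,x^{\underline{k}}
\;=\;\frac{1}{x+1}\sum_{k=0}^{r-1}\eulerian{r}{k}\binom{x+k+1}{r+1}
\;=\;\frac{B_{r+1}(x+1)-B_{r+1}}{(r+1)(x+1)}.
\]
All three expressions are polynomials in $x$, so it suffices to check equality at every non-negative integer $x$, where each equals $\frac{1}{x+1}\sum_{j=0}^{x}j^{r}$: for the first, use $j^{r}=\sum_{k}S(r,k)\,j^{\underline{k}}$ together with $\sum_{j=0}^{x}j^{\underline{k}}=\tfrac{(x+1)^{\underline{k+1}}}{k+1}$ and $(x+1)^{\underline{k+1}}=(x+1)\,x^{\underline{k}}$; for the second, use Worpitzky's identity $j^{r}=\sum_{k}\eulerian{r}{k}\binom{j+k}{r}$ and the hockey-stick identity; for the third, use Faulhaber's formula $\sum_{j=0}^{x}j^{r}=\tfrac{B_{r+1}(x+1)-B_{r+1}}{r+1}$.

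To get (1) and (2), I would evaluate at $x=-\tfrac12$. By the multiplication formula $B_{r+1}(1/2)=(2^{-r}-1)B_{r+1}$, the right-hand side becomes $\frac{2(2^{-r}-2)B_{r+1}}{r+1}$. On the Stirling side $(-\tfrac12)^{\underline{k}}=(-1)^{k}\tfrac{(2k-1)!!}{2^{k}}=2\,(-1)^{k}2^{-2k}\tfrac{(2k-1)!}{(k-1)!}$, so solving for $B_{r+1}$ gives (1). On the Eulerian side one computes $\binom{k+\frac12}{r+1}=\frac{(-1)^{r-k}(2k+1)!!\,(2r-2k-1)!!}{2^{r+1}(r+1)!}$ and rewrites the product of double factorials using $(2k+1)!!\,(2r-2k-1)!!=\frac{(2r)!}{2^{r-1}(r-1)!}\cdot\frac{\binom{r-1}{k}}{\binom{2r}{2k+1}}$ (equivalently $\binom{k+\frac12}{r+1}=\frac{(-1)^{r-k}}{2^{2r}}\binom{2r}{r-1}\frac{\binom{r-1}{k}}{\binom{2r}{2k+1}}$); substituting, solving for $B_{r+1}$, and re-indexing by $l=r-k$ yields (2).

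To get (3) and (4), I would instead differentiate the master identity in $x$ and put $x=0$. Since $B_{r+1}(1)=B_{r+1}$ for $r\ge1$, the rightmost expression is a polynomial vanishing at $x=0$, and its derivative there is $\frac{B_{r+1}'(1)}{r+1}=B_{r}(1)=(-1)^{r}B_{r}$ (using $B_{r+1}'=(r+1)B_{r}$ and $B_{r}(1)=(-1)^{r}B_{r}$). On the Stirling side $\frac{d}{dx}x^{\underline{k}}\big|_{x=0}=(-1)^{k-1}(k-1)!$, which gives (3). On the Eulerian side, after cancelling the factor $x+1$ the $k$-th summand is $g_{k}(x)=\frac{1}{(r+1)!}\prod_{0\le m\le r,\,m\ne k}(x+k+1-m)$, which has a simple zero at $x=0$ coming from the factor with $m=k+1$; hence $g_{k}'(0)=\frac{1}{(r+1)!}\prod_{m\ne k,\,k+1}(k+1-m)=\frac{(-1)^{r-k-1}(k+1)!\,(r-k-1)!}{(r+1)!}=\frac{(-1)^{l-1}}{l\binom{r+1}{l}}$ with $l=r-k$, which gives (4).

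The hard part is bookkeeping rather than ideas: in (1)--(2), reducing the half-integer binomial $\binom{k+\frac12}{r+1}$ to the ratio $\binom{r-1}{k}/\binom{2r}{2k+1}$ while correctly tracking the powers of $2$, and in (3)--(4), identifying exactly which linear factor of $\binom{x+k+1}{r+1}$ vanishes at $x=0$ and thus survives differentiation. One harmless cosmetic point: this route produces the prefactors in (1) and (2) in the form $-\tfrac{(r+1)2^{r}}{2^{r+1}-1}$ rather than $\tfrac{(-1)^{r}(r+1)2^{r}}{2^{r+1}-1}$; the two agree for odd $r$, while for even $r\ge2$ both $B_{r+1}$ and the accompanying sum vanish, so the stated identities hold verbatim for all $r\ge1$.
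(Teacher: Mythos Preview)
Your proof is correct and takes a genuinely different route from the paper's. The paper proves (1) and (2) analytically: it starts from the integral representation
\[
\zeta(s)=\frac{1}{\pi(2-2^{s})}\int_{0}^{\infty}\frac{x^{-1/2}\,\textbf{Li}_{s}(-x)}{1+x}\,dx
\]
(obtained via Ramanujan's Master Theorem), specializes to $s=-r$, and then substitutes the two closed forms
\[
\textbf{Li}_{-r}(-x)=\sum_{k=1}^{r}k!\,S(r,k)\frac{(-x)^{k}}{(1+x)^{k+1}}
\quad\text{and}\quad
\textbf{Li}_{-r}(-x)=\frac{1}{(1+x)^{r+1}}\sum_{j=0}^{r-1}\eulerian{r}{j}(-x)^{r-j}.
\]
Each substitution reduces the integral to a sum of Beta values, and the identity $\zeta(-r)=(-1)^{r}B_{r+1}/(r+1)$ finishes the computation. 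For (3) and (4) the paper switches to the kernel $\dfrac{1}{x(1+x)}$ in place of $\dfrac{x^{-1/2}}{1+x}$, giving $\zeta(1-r)$ instead of $\zeta(-r)$.

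Your argument is entirely algebraic: a single master polynomial identity coming from Faulhaber's formula, Worpitzky's identity, and the Stirling expansion of powers, which you then specialize at $x=-\tfrac12$ and differentiate at $x=0$. What this buys you is elementarity---no zeta function, no Ramanujan Master Theorem, no Beta integrals---and it makes transparent why (1)/(3) and (2)/(4) come in pairs (same polynomial, two different specializations). It also explains cleanly the cosmetic $(-1)^{r}$ versus $-1$ discrepancy you flagged. What the paper's approach buys is a flexible analytic machine: varying the Mellin weight ($x^{-1/2}$ versus $x^{-1}$) or plugging in other expansions of $\textbf{Li}_{-r}$ mechanically produces further identities, so the method generalizes in directions the polynomial argument does not immediately suggest.
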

\begin{proof}
Our proof of (1) and (2) relies on the following integral representation for the Riemann Zeta function 
\begin{equation}
    \zeta(s)=\frac{1}{\pi(2-2^s)}\int_{0}^{\infty}\frac{x^{-1/2}\textbf{Li}_{s}(-x)}{1+x}\, dx,
\end{equation}
where $\textbf{Li}_{s}(-x)$ is the Polylogarithm function, and is valid for all $s\in \mathbb{C}\setminus\{1\}$. The above is derived using the Ramanujan's Master Theorem in the appendix. The above can also be obtained from formula 3.2.1.6 in the book [3].\par
The integral representation (5) can be used to obtain, for example,
\begin{equation}
\zeta(0) =\frac{1}{\pi}\int_{0}^{\infty}\frac{x^{-1/2}\textbf{Li}_{0}(-x)}{1+x}\, dx=\frac{-1}{2}.
\end{equation}
Also,
\begin{equation}
    \zeta(-r)=\frac{1}{\pi(2-2^{-r})}\int_{0}^{\infty}\frac{x^{-1/2}\textbf{Li}_{-r}(-x)}{1+x}\, dx.
\end{equation}
Now, using the following representation from the note [4]
\begin{equation}
    \textbf{Li}_{-r}(-x)=\sum_{k=1}^rk!S(r,k)\left(\frac{1}{1+x}\right)^{k+1}(-x)^{k},
\end{equation}
which can be easily proved using induction on $r$.\par 
Now, we deduce equation (1) from the following steps
\begin{eqnarray*}
\zeta(-r)=\frac{1}{\pi(2-2^{-r})}\int_{0}^{\infty}\frac{x^{-1/2}}{1+x}\textbf{Li}_{-r}(-x)\, dx\\
=\frac{1}{\pi(2-2^{-r})}\int_{0}^{\infty}\frac{x^{-1/2}}{1+x}\sum_{k=1}^rk!S(r,k)\left(\frac{1}{1+x}\right)^{k+1}(-x)^{k}\, dx\\
=\frac{1}{\pi(2-2^{-r})}\sum_{k=1}^{r}k!\cdot S(r,k)\cdot (-1)^{k}\cdot \beta\left(k+\frac{1}{2},\frac{3}{2}\right)\\
=\frac{1}{\pi(2-2^{-r})}\sum_{k=1}^{r}\frac{1}{k+1}\cdot S(r,k)\cdot (-1)^{k}\cdot \Gamma(k+1/2)\cdot \Gamma(3/2)\\
=\frac{1}{\pi(2-2^{-r})}\sum_{k=1}^{r}\frac{1}{k+1}\cdot S(r,k)\cdot (-1)^{k}\cdot \frac{\Gamma(2k)}{\Gamma(k)}\cdot 2^{1-2k}\cdot\frac{\pi}{2}
\\
=\frac{2^r}{2^{r+1}-1}\sum_{k=1}^{r}\frac{S(r,k)}{k+1}(-1)^{k}2^{-2k}\frac{(2k-1)!}{(k-1)!},
\end{eqnarray*}
and the fact that $\zeta(-r)=(-1)^{r}\frac{B_{r+1}}{r+1}$. Here, $\Gamma(\cdot)$ and $\beta(\cdot, \cdot)$ are the Gamma and Beta function respectively.\par 
Another representation for $\textbf{Li}_{-r}(-x)$ is the following from [5]
\begin{equation}
    \textbf{Li}_{-r}(-x)=\frac{1}{(1+x)^{r+1}}\cdot \sum_{j=0}^{r-1}\eulerian{r}{j}\cdot (-x)^{r-j}.
\end{equation}
Now, to derive equation (2) we follow the steps below
\begin{eqnarray*}
\zeta(-r)=\frac{1}{\pi(2-2^{-r})}\int_{0}^{\infty}\frac{x^{-1/2}}{1+x}\textbf{Li}_{-r}(-x)\, dx\\
=\frac{1}{\pi(2-2^{-r})}\int_{0}^{\infty}\frac{x^{-1/2}}{1+x}\cdot \frac{1}{(1+x)^{r+1}}\cdot \sum_{j=0}^{r-1}\eulerian{r}{j}\cdot (-x)^{r-j}\, dx\\
=\frac{1}{\pi(2-2^{-r})}\sum_{j=0}^{r-1}(-1)^{r-j}\eulerian{r}{j}\cdot \beta\left(r-j+\frac{1}{2},j+\frac{3}{2}\right)\\
=\frac{1}{\pi(2-2^{-r})}\sum_{j=0}^{r-1}(-1)^{r-j}\eulerian{r}{j}\cdot \frac{\Gamma(r-j+1/2)\, \Gamma(j+3/2)}{\Gamma(r+2)}\\
=\frac{1}{\pi(2-2^{-r})}\sum_{l=1}^{r}(-1)^{l}\eulerian{r}{r-l}\cdot \frac{\Gamma(l+1/2)\, \Gamma(r-l+3/2)}{\Gamma(r+2)}\\
=\frac{1}{\pi(2-2^{-r})}\sum_{l=1}^{r}(-1)^{l}\eulerian{r}{r-l}\cdot \frac{2^{1-2l} \sqrt{\pi}\, \Gamma(2l)}{\Gamma(l)\Gamma(r+2)}\cdot \frac{2^{-1-2(r-l)}\sqrt{\pi}\,\Gamma(2(r-l+1)}{\Gamma(r-l+1)}\\
=\frac{1}{2^{r}(2^{r+1}-1)\cdot(r+1)!}\sum_{l=1}^{r}(-1)^l \eulerian{r}{r-l}\frac{(2l-1)!}{(l-1)!}\cdot \frac{(2r-2l+1)!}{(r-l)!}\\
=
\frac{1}{2^{r}(2^{r+1}-1)\cdot(r+1)!}\sum_{l=1}^{r}(-1)^l \eulerian{r}{r-l}\frac{\binom{r-1}{l-1}}{\binom{2r}{2l-1}}\cdot \frac{(2r)!}{(r-1)!},
\end{eqnarray*}
and the fact that $\zeta(-r)=(-1)^{r}\frac{B_{r+1}}{r+1}$.\par 
To prove (3) and (4), we use the following integral representation
\begin{equation}
    \zeta(s+1)=\frac{-1}{s}\int_{0}^{\infty}\frac{\textbf{Li}_{s}(-x)}{(x)(1+x)}\, dx,
\end{equation}
which can be derived from the result in the appendix, and the representations (8) and (9).
\end{proof}
\section*{Appendix}
Following result appears in [2] in a different form.
\begin{theorem}
\normalfont
For all $s\in \mathbb{C}\setminus \{1\}$, and $0<n<1$, we have
\begin{equation}
\int_{0}^{\infty}x^{n-1}\frac{\textbf{Li}_{s}(-x)}{1+x}dx=\frac{\pi}{\sin{n\pi}}(\zeta(s)-\zeta(s,1-n)),
\end{equation}
where $\zeta(s,1-n)$ represents the Hurwitz Zeta function.
\end{theorem}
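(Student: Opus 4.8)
\emph{Proof sketch (plan).}
The plan is to apply Ramanujan's Master Theorem: if a function defined on $(0,\infty)$ has a Maclaurin-type expansion $f(x)=\sum_{k=0}^{\infty}\frac{(-1)^k}{k!}\varphi(k)\,x^{k}$ near the origin, with $\varphi$ extending to an analytic function of suitable growth, then $\int_{0}^{\infty}x^{n-1}f(x)\,dx=\Gamma(n)\,\varphi(-n)$. First I would expand the integrand for $|x|<1$: since $\mathbf{Li}_{s}(-x)=\sum_{m\ge 1}\frac{(-x)^m}{m^{s}}$ and $\frac{1}{1+x}=\sum_{j\ge 0}(-x)^{j}$, the Cauchy product gives
\[
\frac{\mathbf{Li}_{s}(-x)}{1+x}=\sum_{k=1}^{\infty}H_{k}^{(s)}(-x)^{k},\qquad H_{k}^{(s)}:=\sum_{m=1}^{k}m^{-s}.
\]
So in the notation of the Master Theorem we take $\varphi(k)=k!\,H_{k}^{(s)}$ for $k\ge 1$ and $\varphi(0)=0$.

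Next I would express the partial sum $H_{k}^{(s)}$ through the Hurwitz zeta function: from $\zeta(s,k+1)=\sum_{m\ge k+1}m^{-s}$ one gets $H_{k}^{(s)}=\zeta(s)-\zeta(s,k+1)$, which also accounts for $\varphi(0)=0$ since $\zeta(s,1)=\zeta(s)$. The natural interpolant is therefore $\varphi(z)=\Gamma(z+1)\bigl(\zeta(s)-\zeta(s,z+1)\bigr)$, and the Master Theorem yields
\[
\int_{0}^{\infty}x^{n-1}\frac{\mathbf{Li}_{s}(-x)}{1+x}\,dx=\Gamma(n)\,\varphi(-n)=\Gamma(n)\Gamma(1-n)\bigl(\zeta(s)-\zeta(s,1-n)\bigr).
\]
The reflection formula $\Gamma(n)\Gamma(1-n)=\pi/\sin(n\pi)$ then gives (12) exactly.

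The real work lies in verifying the hypotheses rather than in the formal manipulation. I would first confirm convergence of the integral for $0<n<1$: near $x=0$ the factor $\mathbf{Li}_{s}(-x)/(1+x)$ is $O(x)$, so the integrand is $O(x^{n})$; as $x\to\infty$, $\mathbf{Li}_{s}(-x)$ grows at most like a power of $\log x$, so the integrand is $O\bigl(x^{n-2}(\log x)^{\Re s}\bigr)$, which is integrable. Then I would check that $\varphi(z)=\Gamma(z+1)(\zeta(s)-\zeta(s,z+1))$ is holomorphic in a half-plane $\Re(z)>-\delta$ — using that $\zeta(s,a)$ is holomorphic in $a$ for $\Re(a)>0$ with $s\ne 1$ fixed — and that it obeys Hardy's growth bound $|\varphi(z)|\le C e^{P\Re(z)+A|\Im(z)|}$ with $A<\pi$: the decay $|\Gamma(x+iy)|\sim \sqrt{2\pi}\,|y|^{x-1/2}e^{-\pi|y|/2}$ supplies $A=-\pi/2<\pi$, while $\zeta(s,z+1)$ contributes only polynomial growth in $z$ by Euler--Maclaurin. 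Assembling these estimates into the precise hypotheses of the Master Theorem, and invoking the identity theorem to pass from the values $\varphi(k)$ at non-negative integers to the interpolant, is the main obstacle; once that is in place, the reflection formula finishes the proof. The companion representation (11) used for (3) and (4) would follow the same route, starting instead from the expansion of $\mathbf{Li}_{s}(-x)/\bigl(x(1+x)\bigr)$.
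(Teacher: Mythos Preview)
Your proposal is correct and follows essentially the same route as the paper: expand $\mathbf{Li}_{s}(-x)/(1+x)$ as $\sum_{k\ge 1}H_{k}^{(s)}(-x)^{k}$, rewrite $H_{k}^{(s)}=\zeta(s)-\zeta(s,k+1)$, and feed this into Ramanujan's Master Theorem. The only cosmetic difference is that the paper quotes RMT in the form $\int_{0}^{\infty}x^{n-1}\sum_{k\ge 0}(-1)^{k}\phi(k)x^{k}\,dx=\dfrac{\pi}{\sin n\pi}\,\phi(-n)$ with $\phi(k)=\zeta(s)-\zeta(s,k+1)$, so the factor $\pi/\sin n\pi$ appears directly rather than via $\Gamma(n)\Gamma(1-n)$; your added discussion of convergence and the Hardy-type growth bound on the interpolant goes beyond what the paper actually verifies.
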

\begin{proof}
Let
$$
H^{(s)}_{n}=\sum_{k=1}^{n}\frac{1}{k^s}
$$
be the generalized Harmonic number. Then, we have the following generating function from [7]
$$
\frac{\textbf{Li}_{s}(x)}{1-x}
=\sum_{n=1}^{\infty}H^{(s)}_{n}x^{n},
$$
for $|x|<1$. We can also write
\begin{equation}
\frac{\textbf{Li}_{s}(-x)}{(1+x)}=\sum_{n=1}^{\infty}H_{n}^{(s)}(-x)^{n}.
\end{equation}
We have the following explicit form 
\begin{equation}
H^{(s)}_{n}
=\zeta(s)-\zeta(s,n+1).
\end{equation}
Next, we use Ramanujan's Master Theorem (\textsc{RMT}) from [6], which is,
\begin{equation}
\label{two}
    \int_{0}^{\infty} x^{n-1}\{ \phi(0)-x \phi(1)+x^{2} \phi(2)-\cdots\}\, dx=\frac{\pi}{\sin{n \pi}}\phi(-n),
\end{equation}
where the integral is convergent for $0<\textbf{Re}(n)<1$, and after certain conditions are satisfied by $\phi$.
Now, using \textsc{RMT} with equations (12) and (13) gives us required equation (11).
\end{proof}
\section*{References}
\begin{itemize}
\item[1] Gould, Henry W. "Explicit formulas for Bernoulli numbers." The American Mathematical Monthly 79.1 (1972): 44-51.
\item[2] Jha, S. K. (2019, May 24). An integral representation for the Riemann zeta function on positive integer arguments.\\ https://doi.org/10.31219/osf.io/ufwep
\item[3] Handbook of Mellin Transforms by Yu. Brychkov, O. Marichev, N, Savischenko, 2019.
\item[4] Stirling Numbers and Polylogarithms by Steven E. Landsburg. Link: http://www.landsburg.com/query.pdf
\item[5] Weisstein, Eric W. "Eulerian Number." From MathWorld--A Wolfram Web Resource.
\item[6] Amdeberhan, T., Espinosa, O., Gonzalez, I., Harrison, M., Moll, V. H., \& Straub, A. (2012). Ramanujan's master theorem. The Ramanujan Journal, 29(1-3), 103-120.
\item[7.] Weisstein, Eric W. "Polylogarithm." From MathWorld--A Wolfram Web Resource. 
\end{itemize}
\end{document}